\newcommand\R{{\mathbb{R}}}
\newcommand\C{{\mathbb{C}}}
\theoremstyle{plain}
  \newtheorem{theorem}[subsection]{Theorem}
  \newtheorem{lemma}[subsection]{Lemma}
\theoremstyle{remark}
  \newtheorem{remark}[subsection]{Remark}
\theoremstyle{definition}
\begin{document}

\title{On extremisers to a bilinear Strichartz inequality}

\author{Shuanglin Shao}
\address{Department of Mathematics, University of Kansas, Lawrence, KS 66045}
\email{slshao@ku.edu}
\thanks{The author was supported in part by NSF Grant \# 1160981.}

\subjclass[2000]{Primary 42B10; Secondary 35Q55}


\keywords{Schr\"odinger Equation, Strichartz inequality, Extremisers.}

\begin{abstract}
In this paper, we show that a pair of Gaussian functions are extremisers to a bilinear Strichartz inequality on $\mathbb{R}\times \mathbb{R}^2$, and unique up to the symmetry group of the inequality. 
\end{abstract}

\maketitle

\section{Introduction}

We consider the free Schr\"odinger equation
\begin{equation}\label{eq:free-schrodinger}
i\partial_t u+\Delta u=0,
\end{equation}
with initial data $u(0,x)=f(x)$ where $u:\mathbb{R}\times \mathbb{R}^d\to \mathbb{C}$ is
a complex-valued function and $d\ge 1$. We denote the solution $u$ by
using the Schr\"odinger evolution operator $e^{it\Delta}$:
\begin{equation}\label{eq-schrodinger}
u(t,x):=e^{it\Delta}f(x):= \frac {1}{(2\pi)^d}\int_{\mathbb{R}^d}
e^{ix\cdot\xi+it|\xi|^2}\widehat{f}(\xi)d\xi,
\end{equation}
where $\widehat{f}$ is the spatial Fourier transform of $f$ defined via
\begin{equation}\label{eq:def-fourier-transfm}
\widehat{f}(\xi):=\int_{\mathbb{R}^d}e^{-ix\cdot\xi}f(x)dx,
\end{equation}
where $x\cdot\xi$ (abbr. $x\xi$) denotes the Euclidean inner
product of $x$ and $\xi$ in the spatial space $\mathbb{R}^d$. When $f\in L^2(\mathbb{R}^d)$, the solution $e^{it\Delta} f$ enjoys a space-time estimate, the Strichartz inequality, 
\begin{equation}\label{eq-strichartz}
\|e^{it\Delta}f\|_{L^{2+\frac 4d}_{t,x}(\mathbb{R}\times \mathbb{R}^d)}
\le C\|f\|_{L^2_x(\mathbb{R}^d)},
\end{equation}
for some $C >0$, see e.g. \cite{Ginibre-Velo:1992:non-endpoint-Strichartz-inequality} or \cite{Tao:2006-CBMS-book}. Define 
$$C_d := \sup \{ \frac {\|e^{it\Delta}f\|_{L^{2+\frac 4d}_{t,x}(\mathbb{R}\times \mathbb{R}^d)} }{\|f\|_{L^2_x(\mathbb{R}^d)}}: \, f\in L^2(\mathbb{R}^d) \, f\neq 0\}.$$
Several authors have investigated the extremal problem for \eqref{eq-strichartz}, which asks whether there is an extremal function $f\in L^2(\mathbb{R}^d)$ such that $$\|e^{it\Delta}f\|_{L^{2+\frac 4d}_{t,x}(\mathbb{R}\times \mathbb{R}^d)} = C_d \|f\|_{L^2(\mathbb{R}^d)}, $$ and what properties the extremal functions have.  More precisely, Kunze \cite{Kunze:2003:maxi-strichartz-1d} treated the $d=1$ case and showed that extremisers exist by a longer concentration-compactness argument; when $d=1,2$, Foschi \cite{Foschi:2007:maxi-strichartz-2d} explicitly determined the
best constants and showed that the extremisers are Gaussians, and they are unique up to the symmetry of the Strichartz inequality. Hundertmark and Zharnitsky
\cite{Hundertmark-Zharnitsky:2006:maximizers-Strichartz-low-dimensions}
independently obtained this result. Carneiro \cite{Carneiro:2008:sharp-strichartz-norm} consider similar extremal questions for some Strichartz-type inequalities.  In \cite{Bennett-Bez-Carbery-Hundertmark:2008:heat-flow-of-strichartz-norm}, by using the method of heat-flow, Bennett, Bez, Carbery and Hundertmark offered a new proof to determine the best constants and the explicit form of extremisers for the symmetric inequality \eqref{eq-strichartz} when $d=1,2$. When $d\ge 3$, only the existence of extermisers for the symmetric Strichartz inequality has been shown, see e.g. \cite{Shao:2008:maximizers-Strichartz-Sobolev-Strichartz}. Similar extremal questions have been treated for the Fourier restriction inequality for the hyper-surfaces in the Euclidean spaces such as the sphere in \cite{Christ-Shao:extremal-for-sphere-restriction-I-existence, Christ-Shao:extremal-for-sphere-restriction-II-characterizations}, and the Strichartz inequality for the wave equation in \cite{Bulut:2009:maximizer-wave, Foschi:2007:maxi-strichartz-2d}.

In this paper, we specify the dimension $d=2$ and consider the extremal problem for a bilinear Strichartz inequality for the Schr\"odinger operator,
\begin{equation}\label{bilinear-strichartz}
\|e^{it\Delta} f e^{it\Delta} g\|_{L^2_{t,x}(\R\times \R^2)} \le \textbf{B} \|f\|_{L^2(\R^2)} \|g\|_{L^2(\R^2)}.
\end{equation}
where $\textbf{B}$ is the optimal constant defined by
\begin{equation}\label{eq-40}
\textbf{B}:=\sup_{f\neq 0,g\neq 0} \frac {\|e^{it\Delta} f e^{it\Delta} g\|_{L^2_{t,x}(\R\times \R^2)}}
{\|f\|_{L^2(\R^2)} \|g\|_{L^2(\R^2)}}.
\end{equation}
We define an extremiser or an extremal function to \eqref{bilinear-strichartz} is a pair of nonzero functions $(f,g) \in L^2\times L^2$ such that
$$\|e^{it\Delta} f e^{it\Delta} g\|_{L^2_{t,x}(\R\times \R^2)} = \textbf{B} \|f\|_{L^2(\R^2)} \|g\|_{L^2(\R^2)}.$$

It is well known that the linear Strichartz inequality \eqref{eq-strichartz} is invariant under the following symmetry group $G$ generated by
\begin{itemize}
\item Translation. $e^{it\Delta}f(x) \to e^{i(t-t_0)\Delta}f(x-x_0)$ for any $(t_0,x_0)\in \R\times \R^2$.
\item Scaling. $e^{it\Delta}f(x) \to \lambda^2 e^{i\lambda^2 t\Delta}f( \lambda x) $ for any $\lambda >0$.
\item Galilean transform. $e^{it\Delta}f(x) \to e^{ix\cdot \xi_0+it|\xi_0|^2} f(x+2t\xi_0) $ for any $\xi_0\in \R^2$.
\item Phase transition. $e^{it\Delta}f(x) \to \alpha e^{it\Delta}f(x)$ for $\alpha\in \mathbb{C}\setminus \{0\}$.
\item Space rotation. $e^{it\Delta}f(x) \to e^{it\Delta}f(Rx)$ for any $R\in SO(2)$.
\end{itemize}
The bilinear Strichartz inequality is invariant when the same symmetry group acts simultaneously  on $(f,g)$ in \eqref{bilinear-strichartz}. It is additionally allowed the following phase transition
$$ e^{it\Delta}f(x) \to \alpha e^{it\Delta}f(x), e^{it\Delta}g(x) \to \beta e^{it\Delta}g(x)$$
for $\alpha \neq 0 $ and $\beta \neq 0$.  For the extremal problem for the linear Strichartz inequality, it is true that the symmetry group $G$ changes an extremal function to another; so an extremal function $f$ will generate a family of extremal functions under the action of $G$. This family of functions is called the orbit of $f$.  Because the inequality \eqref{bilinear-strichartz} is invariant under the symmetry in $G$, it is also the case for the extremal function. Now we state the following result.

\begin{theorem}\label{thm}
The pair of Gaussian functions $$(f,g) = \bigl(\exp{(-|x|^2)}, \exp{(-|x|^2)} \bigr)$$ is an extremiser to the bilinear Strichartz inequality \eqref{bilinear-strichartz}, and $\textbf{B}=\frac 12$. Moreover, the set of extremisers for which \eqref{bilinear-strichartz} holds coincides with the orbit of
$$ (f,g) = \bigl(\exp{(A|x|^2+b\cdot x +C_1)}, \exp{(A|x|^2+b\cdot x +C_2)} \bigr),$$
where $A \in \mathbb{C}$ with the real part $\Re(A) <0$ and $ C_1, C_2 \in \mathbb{C}$ and $b\in \mathbb{C}^2$.
\end{theorem}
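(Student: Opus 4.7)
The plan has two parts: first determining the sharp constant, then characterising the extremisers. The upper bound $\textbf{B}\le C_2^2$ is already given by \eqref{eq-41}--\eqref{eq-42}. For the matching lower bound I would test the pair $(f,g)=(e^{-|x|^2},e^{-|x|^2})$: in this case $e^{it\Delta}f=e^{it\Delta}g$, so
\[
\|e^{it\Delta}f\,e^{it\Delta}g\|_{L^2_{t,x}}^2 = \|e^{it\Delta}f\|_{L^4_{t,x}}^4 = C_2^4\,\|f\|_{L^2}^4
\]
by Foschi's saturation of the linear Strichartz inequality at $d=2$. This gives $\textbf{B}\ge C_2^2$ and hence $\textbf{B}=C_2^2$; the explicit value $\textbf{B}=\tfrac12$ is then read off from Foschi's formula for $C_2$.

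Knowing $\textbf{B}=C_2^2$, any extremal pair $(f,g)$ must saturate both inequalities in \eqref{eq-41} simultaneously. Saturation of the second (Strichartz) step forces $f$ and $g$ each to be linear Strichartz extremisers, so by Foschi and Hundertmark--Zharnitsky each is a complex Gaussian
\[
f(x)=\exp\bigl(A_1|x|^2+b_1\cdot x+C_1\bigr),\qquad g(x)=\exp\bigl(A_2|x|^2+b_2\cdot x+C_2\bigr),
\]
with $\re A_j<0$, $b_j\in\C^2$, $C_j\in\C$. Saturation of the first (Cauchy--Schwarz) step applied to $\int|u|^2|v|^2$ requires in addition $|e^{it\Delta}f|^2=\lambda\,|e^{it\Delta}g|^2$ pointwise in $(t,x)$ for some $\lambda>0$.

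The main step is deducing from this pointwise proportionality that $A_1=A_2$ and $b_1=b_2$. Completing the square in the Gaussian integral defining $e^{it\Delta}f$, one obtains, with $\alpha_j:=1/(4A_j)$ and $\beta_j:=-ib_j/(2A_j)$, an explicit formula for $e^{it\Delta}f(x)$ as a complex Gaussian in $x$ whose coefficients are rational functions of $t$; the squared modulus $|e^{it\Delta}f|^2$ is a real Gaussian in $x$ whose $|x|^2$-coefficient equals $\re\alpha_j/|\alpha_j+it|^2$. Since $\log|e^{it\Delta}f|^2-\log|e^{it\Delta}g|^2$ must be independent of $(t,x)$, the $|x|^2$-coefficients must agree for every $t\in\R$; cross-multiplying and equating like powers of $t$ forces $\alpha_1=\alpha_2$, i.e.\ $A_1=A_2$. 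A parallel argument on the linear $x$-coefficient then gives $\beta_1=\beta_2$, i.e.\ $b_1=b_2$. The amplitudes $C_1,C_2$ remain unconstrained.

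Conversely, every pair of the form described in the theorem lies in the orbit of $(e^{-|x|^2},e^{-|x|^2})$ under the simultaneous action of $G$ augmented by the independent phase transitions available in the bilinear setting, so equality holds throughout the family. The main computational obstacle will be the algebraic bookkeeping in the matching step: tracking real and imaginary parts of $\alpha_j,\beta_j$ and verifying that the resulting polynomial identities in $t$ admit no solutions other than $\alpha_1=\alpha_2$ and $\beta_1=\beta_2$.
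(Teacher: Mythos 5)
Your proposal is correct, but it takes a genuinely different route from the paper's own proof. The paper does not argue through the chain \eqref{eq-41} at all: it writes $\|e^{it\Delta}f\,e^{it\Delta}g\|_{L^2}$ via Plancherel as a constant multiple of $\|F\sigma*G\sigma\|_{L^2}$ for the lifts $F,G$ of $\hat f,\hat g$ to the paraboloid, applies Cauchy--Schwarz \emph{inside} the convolution integral against the delta measure, and invokes $\sigma*\sigma\equiv\pi/2$ on the support (Lemma \ref{le-convolution-constant}) to obtain the sharp constant; equality then yields the Cauchy--Pexider functional equation $\hat f(x)\hat g(y)=H(|x|^2+|y|^2,x+y)$, from which $\hat g=c\hat f$ follows by setting $x=0$ and $y=0$, and the Gaussian form follows from Lemma \ref{le-functional-equation}. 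You instead exploit the factorization \eqref{eq-41}: equality in the H\"older step forces $f$ and $g$ each to be linear Strichartz extremisers (hence Gaussians, by Foschi and Hundertmark--Zharnitsky, which you must import as a black box), and equality in the outer Cauchy--Schwarz forces $|e^{it\Delta}f|^2=\lambda\,|e^{it\Delta}g|^2$ for a.e.\ $(t,x)$, after which your cross-multiplication in $t$ does close: the $t^2$ and $t^1$ coefficients pin down $\re$ and $\operatorname{Im}$ of $1/(4A_j)$, and using the full $t$-dependence (not just $t=0$) is exactly what rules out $b_1$ and $b_2$ differing by an imaginary vector, i.e.\ a Galilean shift applied to only one factor. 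The trade-off is that your argument is shorter and more modular but rests on the full strength of the linear characterization theorem and requires computing $e^{it\Delta}$ of a general complex Gaussian, whereas the paper's argument is self-contained modulo Foschi's two lemmas, treats $f$ and $g$ symmetrically without presupposing the linear result, and, being run at the level of measures on the paraboloid, carries over directly to other multilinear settings such as the trilinear one-dimensional inequality \eqref{eq-10}. Both routes deliver $\textbf{B}=C_2^2$ and the same extremiser family.
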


To establish Theorem \ref{thm}, we follow Foschi's argument in \cite{Foschi:2007:maxi-strichartz-2d}, where it is shown that Guassians are the only extremisers to the linear Strichartz inequality
\begin{equation}\label{linear-strichartz}
\|e^{it\Delta}{f}\|_{L^4_{t,x} (\R \times \R^2)} \le C_2\|f\|_{L^2(\R^2)}
\end{equation} up to the symmetry in $G$.

\begin{remark}
An analogous theorem to Theorem \ref{thm} can be established for a trilinear Strichartz inequality in the one-dimensional case,
\begin{equation}\label{eq-10}
\|e^{it\Delta} f e^{it\Delta} g e^{it\Delta} h \|_{L^2_{t,x}(\mathbb{R} \times \mathbb{R})}\le C \|f\|_{L^2} \|g\|_{L^2} \|h\|_{L^2}.
\end{equation}
Roughly speaking, $(f,g,h)$ is a triple of extremisers if and only if they are scalar multiples of a Gaussian function.
\end{remark}

\begin{remark}
Recently it comes to our attention that M. Charalambides \cite{Charalambides:2012:Cauchy-Pexider} systematically investigated the question of characterizing functions $f, g$ and $h$ such that the Cauchy-Pexider functional equation $f(x)g(y)=h(x+y)$ with $x,y$ on some hyper-surface in $R^{d+1}$. The solutions to such functional equation are uniquely determined to be exponential affine functions. This is closely connected to Theorem \ref{thm} because the functional equation characterizing the sharpness of the bilinear Strichartz inequality \eqref{bilinear-strichartz} is in the same form, see Section \ref{proof}. 
\end{remark}

{\bf Acknowledgment.} The author is grateful to B. Pausader,  and J. Jiang for many helpful discussions.

\section{Notation and preliminary}

We begin with some notation in $\mathbb{R}^d$. Define the Fourier transform,
$$\mathcal{F}(f) (\xi)= \widehat{f}(\xi) =\int_{\R^d} e^{-ix\cdot \xi} f(x)dx, \quad \xi\in \R^d.$$
The inverse of the Fourier transform,
$$\mathcal{F}^{-1} (\widehat{f})(x)=f(x) =\frac {1}{(2\pi)^d}\int_{\R^d} e^{ix\cdot \xi} \widehat{f}(\xi)d\xi.$$
We also use these notations to indicate the Fourier transform or the inverse Fourier transform in space-time $(t,x)$. 

The Plancherel theorem states that
$$ \|f\|_{L^2(\R^d)}= \frac {1}{(2\pi)^{d/2}} \|\widehat{f}\|_{L^2(\R^d)}. $$
Moreover the Parseval identity states that
$$ \int_{\R^d}f(x)\overline{g}(x)dx =\frac {1}{(2\pi)^d}\int_{\R^d} \widehat{f}(\xi)\overline{\widehat{g}}(\xi)d\xi.$$

Let $\sigma$ be the endowed measure on the paraboloid $P:=\{(\tau,\xi): \tau =|\xi|^2\}$ in $\R^3$, defined to be the pullback of the Lebesgue measure under the projection map: $(|\xi|^2,\xi)\mapsto \xi$. We write 
$$ e^{it\Delta}f(x)=\frac {1}{(2\pi)^d}\int_{\R^d} e^{ix\cdot\xi+it|\xi|^2} \widehat{f}(\xi) d\xi. $$
We lift $\widehat{f}$ onto the paraboloid $P:= \{(\tau,\xi): \, \tau= |\xi|^2\}$ via 
\begin{equation}\label{eq-45}
F(\xi, |\xi|^2) = \widehat{f}(\xi).
\end{equation}
Then by using the notation of the adjoint Fourier restriction operator for the paraboloid \cite[p. 347]{Stein:1993},
\begin{equation}\label{eq-46}
e^{it\Delta}f(x) =2\pi \mathcal{F}^{-1}(F\sigma) (t,x),
\end{equation}
Here $\mathcal{F}^{-1}$ is understood as the inverse Fourier transform on $\R^{d+1}$; $F\sigma = \hat{f}(\xi) \delta(\tau-|\xi|^2)$ is the measure supported on the paraboloid in $\R^{d+1}$.  

We define the convolution of $f$ and $g$,
$$f*g (x) =\int_{\R^d} f(x-y)g(y)dy,$$
and record a useful identity about convolution under the action of Fourier transform
\begin{equation}\label{eq-1}
 \widehat{f* g} = \widehat{f}\widehat{g}
 \end{equation}
 For $f,g$ on $\R^{d}$,
 \begin{equation} 
 \mathcal{F}\bigl(fg\bigr)= \frac {1}{(2\pi)^{d}} \mathcal{F}{f}* \mathcal{F}{g}.
 \end{equation}

We also record several lemmas from Foschi \cite{Foschi:2007:maxi-strichartz-2d}. The following lemma shows that the convolution of the surface measure $\sigma$ is constant on its corresponding support $\Omega:=\{(\tau, \xi)\in \R\times \R^2: \, 2\tau \ge |\xi|^2\}$, see e.g. \cite[Lemma 3.2]{Foschi:2007:maxi-strichartz-2d} .
 \begin{lemma}\label{le-convolution-constant}
 \begin{equation}\label{eq-9}
 \text{If } x\in \Omega, \quad \sigma * \sigma (x) =\pi/2.
 \end{equation}
 \end{lemma}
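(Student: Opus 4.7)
The plan is a direct computation that identifies $\sigma * \sigma$ as an absolutely continuous measure with constant density on $\Omega$. I would test against an arbitrary $\phi \in C_c(\R \times \R^2)$, unfold the convolution as a double integral on $\R^2 \times \R^2$ using the pushforward description of $\sigma$, change to sum-and-difference variables, and finally pass to polar coordinates to evaluate the resulting radial integral explicitly.

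Since $\sigma$ is the pushforward of Lebesgue measure on $\R^2$ under $\eta \mapsto (|\eta|^2,\eta)$, the definition of the convolution of two measures gives
\begin{equation*}
\int \phi\, d(\sigma * \sigma) = \int_{\R^2}\!\!\int_{\R^2} \phi\bigl(|\eta_1|^2+|\eta_2|^2,\; \eta_1+\eta_2\bigr)\, d\eta_1\, d\eta_2.
\end{equation*}
I then substitute $\xi = \eta_1 + \eta_2$ and $\zeta = (\eta_1 - \eta_2)/2$. This is a linear isomorphism on $\R^4$ whose Jacobian determinant has absolute value $1$, and it converts the quadratic form via $|\eta_1|^2+|\eta_2|^2 = \tfrac{1}{2}|\xi|^2 + 2|\zeta|^2$. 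So the integral becomes
\begin{equation*}
\int_{\R^2}\!\!\int_{\R^2} \phi\bigl(\tfrac{1}{2}|\xi|^2 + 2|\zeta|^2,\; \xi\bigr)\, d\zeta\, d\xi.
\end{equation*}

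Next I would evaluate the inner $\zeta$-integral. In polar coordinates $\zeta = r\omega$ with $r \ge 0$ and $\omega \in S^1$, the angular integration produces a factor $2\pi$, and the substitution $\tau = \tfrac{1}{2}|\xi|^2 + 2r^2$ gives $r\, dr = \tfrac{1}{4}\, d\tau$, so that
\begin{equation*}
\int_{\R^2} \phi\bigl(\tfrac{1}{2}|\xi|^2 + 2|\zeta|^2,\, \xi\bigr)\, d\zeta = \frac{\pi}{2}\int_{|\xi|^2/2}^{\infty} \phi(\tau,\xi)\, d\tau.
\end{equation*}
The lower endpoint $\tau \ge |\xi|^2/2$ is precisely the condition $(\tau,\xi) \in \Omega$, so altogether
\begin{equation*}
\int \phi\, d(\sigma * \sigma) = \frac{\pi}{2} \int_{\Omega} \phi(\tau,\xi)\, d\tau\, d\xi,
\end{equation*}
which identifies $\sigma * \sigma$ with the absolutely continuous measure $\tfrac{\pi}{2}\,\mathbf{1}_{\Omega}\, d\tau\, d\xi$ and yields the stated value $\pi/2$ on $\Omega$.

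There is no substantive obstacle. The only step that requires any care is verifying that the symmetric change of variables has unit Jacobian and that the quadratic form simplifies as claimed; after that, the polar reduction is routine and the constant $\pi/2$ drops out cleanly.
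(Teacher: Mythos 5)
Your computation is correct: the sum--and--difference substitution has unit Jacobian, the identity $|\eta_1|^2+|\eta_2|^2=\tfrac12|\xi|^2+2|\zeta|^2$ is right, and the polar/one-variable substitution in $\zeta$ does produce the density $\tfrac{\pi}{2}\mathbf{1}_{\Omega}$, with the lower limit of the $\tau$-integral correctly identifying the support as $\Omega=\{2\tau\ge|\xi|^2\}$. However, this is a genuinely different route from the one the paper takes (following Foschi's Lemma 3.2): there the point is first to observe that $\sigma*\sigma$ is invariant under the Galilean-type map $(\xi,\tau)\mapsto(0,\tau-\tfrac{|\xi|^2}{2})$, which slides any point of $\Omega$ onto the $\tau$-axis, and then to use the dilation symmetry of the paraboloid to reduce to a single point, where the value $\pi/2$ is computed once. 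Your approach buys a completely self-contained, global evaluation in one pass --- you never need to argue separately that the convolution is constant, since the density simply drops out of the change of variables, and you get the exact support for free. The symmetry approach buys conceptual insight and generality: it explains \emph{why} the answer can only depend on the invariant $2\tau-|\xi|^2$, and in dimensions $d\neq 2$ (where $\sigma*\sigma$ is a nonconstant power of that quantity rather than a constant) the reduction to the $\tau$-axis still works while your direct computation would have to be redone with a nontrivial radial integral. Either argument is acceptable here.
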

The proof uses that the convolution of measure supported on paraboloid is invariant under the mapping $(x',x_3) \to (0, x_3 - \frac {|x'|^2}{2})$, and the dilation symmetry of the paraboloid.

Finally we cite a lemma \cite[Proposition 7.15]{Foschi:2007:maxi-strichartz-2d} on characterizing the following functional inequality,
\begin{equation}\label{eq-functional-equation}
f(x) f(y) = H(|x|^2+|y|^2, x+y), \text{ for almost everywhere } (x,y) \in \R^2 \times \R^2.
\end{equation}
\begin{lemma}\label{le-functional-equation}
If $f: \R^2 \to \C$ and $H: \Omega \to \C $ are nontrivial locally integrable functions which satisfy the functional equation \eqref{eq-functional-equation}, then there exists constants $A \in \C$, $b\in \C^2$ and $C\in \C$ such that
$$ f(x) =\exp(A |x|^2 + b\cdot x+ C), \quad H(t,x) =\exp (At + b\cdot x +2C) $$
for almost all $(t,x)\in \Omega$.
\end{lemma}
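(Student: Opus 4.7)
The plan is to reduce the functional equation to a Cauchy--Pexider equation via a rotation trick and then solve it using classical results. Throughout, I work on the nonvanishing set of $f$, set $\phi := \log f$ and $\Phi := \log H$, and treat $\phi(x)+\phi(y) = \Phi(|x|^2+|y|^2, x+y)$ almost everywhere.

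\emph{Separation.} Parameterize $x = (s+u)/2$, $y = (s-u)/2$; then $|x|^2+|y|^2 = (|s|^2+|u|^2)/2$ depends on $u$ only through $|u|$, so $\phi((s+u)/2)+\phi((s-u)/2)$ is invariant under $u\mapsto Ru$ for every $R\in O(2)$. Taking $R$ to be the reflection $(u_1,u_2)\mapsto(u_1,-u_2)$ and relabeling coordinates yields
\[
\phi(a,d)+\phi(b,c) = \phi(a,c)+\phi(b,d) \quad\text{for a.e. } (a,b,c,d)\in\R^4,
\]
which rearranges to show that $\phi(a,d)-\phi(a,c)$ is independent of $a$. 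It follows that $\phi(x_1,x_2) = P_1(x_1)+P_2(x_2)$ almost everywhere, for some measurable $P_1, P_2$.

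\emph{Cauchy--Pexider step.} Substituting the separated form into the original equation and writing $\xi_j := x_j+y_j$, $\tau_j := x_j^2+y_j^2$, the left-hand side becomes $\tilde p(\xi_1,\tau_1)+\tilde q(\xi_2,\tau_2)$, where $\tilde p, \tilde q$ are the natural symmetrized reformulations of $P_1, P_2$. Fixing $\tau := \tau_1+\tau_2$ and $(\xi_1,\xi_2)$ while varying $\tau_1$, the right-hand side $\Phi(\tau,\xi_1,\xi_2)$ is $\tau_1$-independent; differentiating in $\tau_1$ gives $\partial_\tau\tilde p(\xi_1,\tau_1) = \partial_\tau\tilde q(\xi_2,\tau-\tau_1)$. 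Since the two sides depend on disjoint variable pairs, both must equal a common constant $A\in\C$. Hence $\tilde p(\xi,\tau) = A\tau + r(\xi)$, which is equivalent to the Cauchy--Pexider equation $\hat P_1(x)+\hat P_1(y) = r(x+y)$ for $\hat P_1(x):=P_1(x)-Ax^2$. The classical theory of measurable solutions to Cauchy's equation gives $\hat P_1$ affine, so $P_1(x) = Ax^2+c_1 x+d_1$; the parallel argument yields $P_2(x) = Ax^2+c_2 x+d_2$ with the same leading coefficient. Combining, $\phi(x) = A|x|^2 + b\cdot x + C$ with $b := (c_1,c_2)$ and $C := d_1+d_2$, so $f(x) = \exp(A|x|^2+b\cdot x+C)$, and substitution recovers $H(t,x) = \exp(At+b\cdot x+2C)$.

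\emph{Main obstacle.} The principal technicality is bridging the almost-everywhere equation with the calculus steps. Taking logarithms presupposes $f \neq 0$ up to null sets, and the $\tau_1$-differentiation requires justification without a priori smoothness. Both can be handled by mollifying $f$ against a compactly supported bump: the product structure of the functional equation propagates to the mollified versions with a smooth kernel, yielding local smoothness on the nonvanishing set of $f$ and legitimizing the calculus arguments above. A posteriori, the candidate solution $\exp(A|x|^2+b\cdot x+C)$ with $\re(A)<0$ is nowhere vanishing, so the formula obtained on a set of positive measure extends globally and the argument closes consistently.
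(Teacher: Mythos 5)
First, a point of orientation: the paper does not prove Lemma \ref{le-functional-equation} at all --- it is quoted directly from Foschi [Proposition 7.15], so your proposal has to stand on its own rather than be compared with a proof in this text. Your skeleton is of the right kind and close in spirit to Foschi's appendix (and to Charalambides's study of Cauchy--Pexider equations restricted to submanifolds, mentioned in the paper's remarks): the reflection identity $f(a_1,a_2)f(b_1,b_2)=f(a_1,b_2)f(b_1,a_2)$ yielding separation of variables, the reduction in each coordinate to a Pexider-type equation after extracting a common quadratic coefficient $A$, and the classical theory of measurable solutions of Cauchy's equation.

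However, there are genuine gaps, and the most serious is logical rather than technical: your handling of the zero set of $f$ is circular. You derive the exponential formula only on (part of) the nonvanishing set, and then argue that ``a posteriori the candidate solution is nowhere vanishing, so the formula obtained on a set of positive measure extends globally.'' The nonvanishing of the solutions you eventually exhibit says nothing about whether some \emph{other} solution of \eqref{eq-functional-equation} could vanish on a set of positive measure while being exponential elsewhere; to rule this out one must prove, from the functional equation itself, that a solution vanishing on a set of positive measure vanishes almost everywhere (hence is trivial, a contradiction). This is a separate lemma in Foschi's treatment, proved by propagating zeros: if $f(x_0)=0$ then $H$ vanishes on the image of $\{x_0\}\times\R^2$, hence $f(x')f(y')=0$ for all pairs on the circles $\{(x',y'):\, x'+y'=x_0+y,\ |x'|^2+|y'|^2=|x_0|^2+|y|^2\}$, and a Fubini argument finishes. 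Without this step your proof characterizes $f$ only on a subset of the plane. Two further gaps: (i) since $f$ is complex-valued, $\phi=\log f$ satisfies your additive equation only modulo $2\pi i\Z$ for any measurable branch, so your starting point does not follow from $f(x)f(y)=H(\cdot)$; the standard fix is to run the separation and Pexider steps multiplicatively (or to treat $|f|$ and the phase separately), or to first establish continuity and nonvanishing and only then fix branches on the simply connected domain, absorbing the resulting integer multiple of $2\pi i$ into the constants. (ii) The mollification sketch does not work as described: convolving in one variable alone does not smooth $H$, because for fixed $x$ the map $y\mapsto(|x|^2+|y|^2,x+y)$ fills only a two-dimensional surface in $\R^3$; one needs joint mollification in $(x,y)$, using that $(x,y)\mapsto(|x|^2+|y|^2,x+y)$ is a submersion off the diagonal $x=y$, followed by a bootstrap back to $f$. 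These nonvanishing and regularity issues are exactly what the bulk of Foschi's argument is devoted to; as written, your proposal assumes them.
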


\section{The proof}\label{proof}
Before we prove Theorem \ref{thm}, we recall Foschi's argument in \cite{Foschi:2007:maxi-strichartz-2d}. Foschi establishes the inequality \eqref{linear-strichartz} with an explicit constant by the Cauchy-Schwarz inequality. The only place where an inequality sign occurs is due to the Cauchy-Schwarz inequality. Then the question  reduces to what functions make the Cauchy-Schwarz inequality sharp in the sense that the inequality becomes equal. This yields a functional equation, whose solutions uniquely determine extremisers. We will apply this idea.

\begin{proof}[Proof of Theorem \ref{thm}]
By the Plancherel theorem,
\begin{align*}
\|e^{it\Delta} f e^{it\Delta} g\|_{L^2(\R^3)} &=(2\pi)^2\|\mathcal{F}^{-1}(F\sigma)\mathcal{F}^{-1} (G\sigma)\|_{L^2(\R^3)} \\
&=(2\pi)^{2} (2\pi)^{-3/2}\|\mathcal{F}\left(\mathcal{F}^{-1}(F\sigma)\mathcal{F}^{-1} (G\sigma \right)\|_{L^2(\R^3)} \\
& =(2\pi)^{1/2}(2\pi)^{-3}\|{F\sigma}*{G\sigma}\|_{L^2(\R^3)} \\
&=\frac {1}{(2\pi)^{5/2}}
\|{F\sigma}*{G\sigma}\|_{L^2(\R^3)} .
\end{align*}
where $F(\xi, |\xi|^2) = \widehat{f}(\xi)$, $G(\xi, |\xi|^2) = \widehat{g}(\xi)$, respectively.

We write $x=(x',x_3)\in \R^2 \times \R$. Since the measure $\sigma$ on paraboloid $P$ is defined to be the pull-back of the Lebesgue measure,
\begin{equation*}
F\sigma = F(x', |x'|^2)dx'= F(x',x_3)\delta(x_3- |x'|^2), \, G\sigma = G(y',|y'|^2)dy'= G(y',y_3)\delta(y_3-|y'|^2).
\end{equation*} 
Then
\begin{align*}
&F\sigma * G\sigma (x) \\
&= \int\int \delta(x=y+z) F(y)\delta(y_3- |y'|^2) G(z)\delta(z_3-|z'|^2) dydz \\
&= \int\int \delta \left(\substack{ x'=y'+z' \\ x_3= |y'|^2+|z'|^2}\right)  F(y',|y'|^2) G(z',|z'|^2) dy'dz'
 \end{align*}
Denote by ${\bf 1}(x)$ the identity function. By using the Cauchy-Schwarz inequality, 
\begin{equation}\label{eq-49}
\begin{split}
&F\sigma * G\sigma (x) \\
 &=\int\int \left(F(y',|y'|^2) G(z',|z'|^2)\right) \left({\bf 1}(y',|y'|^2) {\bf 1}(z',|z'|^2)\right)\delta \left(\substack{ x'=y'+z' \\ x_3= |y'|^2+|z'|^2}\right) dy'dz' \\
&=\left(\int\int \left|F(y',|y'|^2) G(z',|z'|^2)\right|^2 \delta \left(\substack{ x'=y'+z' \\ x_3= |y'|^2+|z'|^2}\right) dy'dz'\right)^{1/2} \left(\int\int \left|{\bf 1}(y',|y'|^2) {\bf 1}(z',|z'|^2)\right|^2 \delta\left(\substack{ x'=y'+z' \\ x_3= |y'|^2+|z'|^2}\right) dy'dz'\right)^{1/2}\\
&= \left( |F|^2 \sigma * |G|^2 \sigma (x) \right)^{1/2} \left(\sigma *\sigma (x) \right)^{1/2}.  
\end{split}
\end{equation}
Therefore 
\begin{equation}\label{eq-4}
\begin{split}
\|F\sigma*G\sigma\|^2_{L^2(\R^3)} &=\int_{\R^3} \bigl| F\sigma*G\sigma(x)\bigr|^2dx\\
& \le \int_{\R^3} \bigl| |F|^2\sigma *|G|^2\sigma (x)\bigr| \bigl| \sigma*\sigma(x)\bigr| dx \\
&=\frac \pi 2 \int_{\R^3} |F|^2\sigma *|G|^2\sigma dx \\
&= \frac \pi 2 \|F\|_{L^2_\sigma}^2  \|G\|_{L^2_\sigma}^2,
\end{split}
\end{equation}
where we have used Lemma \ref{le-convolution-constant},
\begin{equation}\label{eq-3}
\sigma *\sigma (x)=\pi/2, \text{ for all } x\in \Omega.
\end{equation}
To conclude so far, we have established the bilinear Strichartz inequality \eqref{bilinear-strichartz} with an explicit constant $1/2$.

If the Cauchy-Schwarz inequality used in \eqref{eq-4} is sharp in the sense that an equal sign occurs, then all the inequalities in \eqref{eq-4} become equal; that is to say
$$ \|F\sigma*G\sigma\|_{L^2} =\sqrt{\frac \pi 2}  \|F\|_{L^2_\sigma} \|G\|_{L^2_\sigma}.$$

An examination of sharpness of the Cauchy-Schwarz inequality in \eqref{eq-49} shows that, there exists $\alpha \in \mathbb{C}$ such that\begin{equation}\label{eq-5}
F(x',|x'|^2) G(y',|y'|^2) = \alpha F(z',|z'|^2) G(w',|w'|^2) .
\end{equation}
if 
\begin{align}
\label{eq-51} |x'|^2 + |y'|^2 & = |z'|^2+ |w'|^2, \\
\label{eq-52}  x'+y' &=z'+w'. 
\end{align}
Thus by reducing $(F, G)$ to $(\hat{f}, \hat{g})$,
\begin{equation}\label{eq-6}
\hat{f}(x') \hat{g}(y')= H(|x'|^2+|y'|^2, x'+y') \text{  for a.e. } (x', y')\in \R^2 \times \R^2.
\end{equation} for some measurable function $H$. 

Since $\hat{f}$ is not identically $0$, without loss of generality, we may assume that $\hat{f}(0)\neq 0$. Let $x'=0$ and $y'=0$ in \eqref{eq-6}, respectively,
$$ \hat{f}(0) \hat{g}(x) = \hat{f}(x) \hat{g}(0) \text{  for a.e. } x\in \R^2.$$
Then
$$\hat{g}(x)=\frac {\hat{g}(0)}{\hat{f}(0)} \hat{f}(x).$$
In view of this, we may assume that $\hat{f}=\hat{g}$ up to constants. This reduces \eqref{eq-6} to
\begin{equation}\label{eq-8}
\hat{f}(x') \hat{f}(y')= H(|x'|^2+|y'|^2, x'+y') \text{  for a.e. } (x',y')\in \R^2 \times \R^2.
\end{equation}
Then by Lemma \ref{le-functional-equation},
there exists constants $A \in \C$, $b\in \C^2$, and $C\in \C$ such that
\begin{equation}\label{eq-7}
\hat{f}(x) =\exp{(A|x|^2+b\cdot x + C)}, \quad H(t,x) =\exp{(At+b\cdot x +2C)}
\end{equation} for almost everywhere $(t,x) \in P $. 
Since the Fourier transform of a Gaussian function is a Gaussian function, we have shown that, the extremisers to \eqref{bilinear-strichartz} are Gaussian functions, which are unique up to the symmetry specified in the first section. This completes the proof of Theorem \ref{thm}.
\end{proof}

\bibliographystyle{amsplain}

\end{document}